\documentclass{amsart}
\usepackage{amssymb}

\newtheorem{thm}{Theorem}
\newtheorem*{lemma}{Lemma}
\theoremstyle{definition}
\newtheorem{rem}{Remark}
\newtheorem*{ack}{Acknowledgements}

\newcommand{\ot}{\otimes}

\begin{document}

\title{Homotopy DG algebras induce homotopy BV algebras} \author{John Terilla} \address{John Terilla, Queens College, City University of New York, Flushing NY 11367}
\email{jterilla@qc.cuny.edu} \author{Thomas Tradler} \address{Thomas Tradler,
 NYC College of Technology, City University of New York, Brooklyn NY
  11201} \email{ttradler@citytech.cuny.edu} \author{Scott O. Wilson}
\address{Scott O. Wilson, Queens College, City University of New York, Flushing NY
  11367} \email{scott.wilson@qc.cuny.edu}

\begin{abstract}
  Let $TA$ denote the space underlying the tensor algebra of a
  vector space $A$.
  In this short note, we show that if $A$ is a differential graded
  algebra, then $TA$ is a 
  differential Batalin-Vilkovisky algebra. Moreover, if $A$
  is an $A_\infty$ algebra, then $TA$ is a commutative $BV_\infty$ algebra. 
\end{abstract}

\maketitle

\section{Main Statement}

Let $(A,d_A)$ be a complex over
a commutative ring $R$.  Our convention is that 
$d_A$ is of degree $+1$. The space 
$TA=\bigoplus_{n\geq 0} A^{\otimes n}$ is graded by
declaring monomials of homogeneous elements 
$a_1 \ot \cdots \ot a_n \in A^{\otimes
  n}$ to be of degree $|a_1 | + \cdots + |a_n| + n$.

There is a shuffle product $\bullet:TA\otimes TA\to TA$ generated
by
\[
(a_1\otimes\dots\otimes a_n)\bullet(a_{n+1}\otimes\dots\otimes
a_{n+m}) := \sum_{\sigma\in S(n,m)} (-1)^\kappa \cdot
a_{\sigma^{-1}(1)}\otimes \dots\otimes a_{\sigma^{-1}(n+m)},
\]
where $S(n,m)$ is the set of all $(n,m)$-shuffles, {\it i.e.} $S(n,m)$
is the set of all permutations $\sigma\in \Sigma_{n+m}$ with
$\sigma(1)<\dots<\sigma(n)$ and $\sigma(n+1)<\dots<\sigma(n+m)$, ({\it
  cf.} \cite{M}). Here $(-1)^\kappa$ is the Koszul sign, which
introduces a factor of $(|a_i| + 1)(|a_j| +1)$ whenever the elements
$a_i$ and $a_j$ move past one another in a shuffle. Note that for
degree zero elements of $A$, this Koszul sign is just $sgn(\sigma)$,
the sign of the permutation $\sigma$.  The shuffle product makes $TA$
into a graded commutative associative algebra. Recall that $TA$ is also a coalgebra
under the usual tensor coproduct.

There is a differential $d:TA\to TA$ (of degree
$+1$) given by extending the differential $d_A:A\to A$ as a
coderivation of the tensor coproduct, see {\it e.g.} \cite{S}:
\[
d( a_1 \ot \cdots \ot a_n) = \sum_{i=0}^n (-1)^{|a_1| + \dots +
  |a_{i-1}| + i-1} a_1 \ot \cdots \ot d_A (a_i) \ot \cdots \ot a_n
\]
and together with the shuffle product, 
the triple $(TA,d,\bullet)$ is a differential graded commutative
associative algebra.

If $\mu_A:A\otimes A\to A$ is an associative product, then 
there is another differential $\Delta=\tilde \mu_A:TA\to TA$, of
degree $-1$, given by extending the multiplication as a coderivation,
\[
\Delta (a_1\otimes \dots \otimes a_n) =\sum_{i=1}^{n-1} (-1)^{|a_1| +
  \dots + |a_{i}| + i-1} a_1\otimes\dots\otimes
\mu_A(a_i,a_{i+1})\otimes \dots\otimes a_n.
\]

In Section \ref{proof1} we show:
\begin{thm}\label{theorem1}
  If $(A,d_A,\mu_A)$ is a differential graded algebra, then 
  $(TA,d,\Delta,\bullet)$ defines a dBV algebra. The
  construction is functorial: If $f: A \to B$ is a morphism of
  differential associative algebras, then the induced map from $TA$ to
  $TB$ is a morphism of dBV algebras.
\end{thm}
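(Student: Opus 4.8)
The structure $(TA,\bullet)$ is already graded commutative and associative, and the excerpt records that $(TA,d,\bullet)$ is a commutative dg algebra; in particular $d^2=0$ and $d$ is a degree $+1$ derivation of $\bullet$. Thus, to exhibit a dBV structure, it remains to verify three properties of $\Delta=\tilde\mu_A$: that $\Delta^2=0$, that $d\Delta+\Delta d=0$, and that $\Delta$ is a differential operator of order $\leq 2$ (the BV relation) with respect to $\bullet$. The plan is to dispatch the first two using the defining property of coderivations of the tensor coproduct, and to isolate the order-two condition as the genuinely new computation.

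For $\Delta^2=0$: by construction $\Delta$ is the unique degree $-1$ coderivation of the tensor coproduct whose corestriction $\pi_1\circ\Delta$ to $A=A^{\otimes 1}$ is $\mu_A$ in arity two and zero otherwise. It is the standard fact underlying the bar construction that such a coderivation squares to zero precisely when $\mu_A$ is associative; since $A$ is a dga this holds, so $\Delta^2=0$. For the anticommutator, note that $d\Delta+\Delta d=[d,\Delta]$ is the graded commutator of two coderivations and is therefore again a coderivation of the tensor coproduct, hence is determined by its corestriction to $A$. A coderivation of the cofree tensor coalgebra vanishes iff this corestriction vanishes, and here the only nonzero contribution comes from $A^{\otimes 2}$, where it equals $d_A\circ\mu_A-\mu_A\circ(d_A\otimes\mathrm{id}+\mathrm{id}\otimes d_A)$ up to sign. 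This is exactly the statement that $d_A$ is a derivation of $\mu_A$, which holds in the dga $A$; hence $[d,\Delta]=0$.

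The heart of the argument is the order-two (BV) relation. Writing the BV bracket $\{a,b\}:=\Delta(a\bullet b)-\Delta(a)\bullet b-(-1)^{|a|}\,a\bullet\Delta(b)$, the condition that $\Delta$ be a second-order operator is equivalent to $\{-,-\}$ being a biderivation of $\bullet$, i.e.\ to the seven-term identity. Conceptually this is forced by a duality: the shuffle product $\bullet$ is dual to the deconcatenation coproduct, $\Delta$ is a coderivation of that same coproduct, and its cogenerating operation $\mu_A$ is \emph{binary}; a coderivation whose corestriction is supported in arity $\leq 2$ dualizes to a differential operator of order $\leq 2$. I would make this precise, or equivalently verify it directly, by expanding $\{a,b\}$ on homogeneous tensors $a=a_1\otimes\cdots\otimes a_n$ and $b=b_1\otimes\cdots\otimes b_m$: the terms of $\Delta(a\bullet b)$ in which the two multiplied letters come from the same factor are cancelled by $\Delta(a)\bullet b$ and $(-1)^{|a|}a\bullet\Delta(b)$ (since summing over shuffles with two fixed adjacent $a$-letters reproduces the shuffle of $\Delta(a)$ with $b$), leaving only the mixed terms in which a letter of $a$ is multiplied against a letter of $b$. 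One then checks that this mixed expression is a derivation in each slot, which reduces, after organizing the interleaved shuffle sums, to associativity of $\mu_A$. Tracking the Koszul signs through these shuffle sums is the main obstacle and the place where the bookkeeping could go wrong; I expect the cleanest route to be the duality statement rather than raw index-chasing.

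Finally, functoriality is routine. A dga morphism $f\colon A\to B$ induces $Tf\colon TA\to TB$ by applying $f$ in each tensor factor. Since $Tf$ preserves degrees and the underlying tensor and permutation combinatorics, it respects $\bullet$ (the Koszul signs depend only on degrees, which $f$ preserves); since $f$ commutes with the differentials, $Tf$ commutes with $d$; and since $f$ is multiplicative, $f\circ\mu_A=\mu_B\circ(f\otimes f)$, so $Tf$ intertwines $\Delta$. Hence $Tf$ is a morphism of dBV algebras.
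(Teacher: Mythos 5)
Your overall architecture is reasonable and, in fact, mirrors the paper's own alternative outline (Remark \ref{thm2-->thm1}): the coderivation arguments for $\Delta^2=0$ and $d\Delta+\Delta d=0$ are fine, and anti-symmetry and functoriality are indeed routine. But the step you yourself call the heart of the argument contains a genuine gap, and the one concrete claim you make about it is wrong. You assert that, after the same-factor terms cancel, the biderivation property of the remaining mixed terms ``reduces, after organizing the interleaved shuffle sums, to associativity of $\mu_A$.'' It cannot: the bracket $\{x,y\}$ and every term of the Leibniz identity are \emph{linear} in $\mu_A$ --- each term contains exactly one occurrence of $\mu_A$, applied to one letter of each argument --- whereas associativity, $\mu_A(\mu_A\otimes\mathrm{id})=\mu_A(\mathrm{id}\otimes\mu_A)$, is quadratic in $\mu_A$, so it can never enter this verification. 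The paper makes this plain: its Lemma in Section \ref{proof2} shows that the coderivation lift of an \emph{arbitrary} linear map $f:A^{\otimes n}\to A$ has operator-order $n$ with respect to the shuffle product, with no associativity hypothesis; in Theorem \ref{theorem1}, associativity is used only to get $\Delta^2=0$. What actually has to be proved for the Leibniz relation is a purely combinatorial shuffle identity with Koszul signs, and that is precisely where the paper spends its effort: either the explicit mixed-pair formula for the bracket together with the decomposition of the sum over $S(n,m,p)$ and the sign checks (Section \ref{proof1}), or the inclusion-exclusion identity $\sum_q \binom{n+1-k}{q-k}(-1)^q=(-1)^k(1-1)^{n+1-k}=0$ (Section \ref{proof2}). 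Your proposal supplies neither.

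Your fallback, the ``duality'' principle, does not close the gap either. First, it is misstated: under the standard pairing the shuffle product dualizes to the unshuffle coproduct, not to the deconcatenation coproduct (deconcatenation is dual to concatenation). Second, even when set up correctly, dualizing merely transposes the problem: $\Delta$ dualizes to a derivation of the concatenation product generated by $\mu_A^*:A^*\to A^*\otimes A^*$, and one must then prove that such a derivation has co-order two with respect to the unshuffle coproduct --- a statement of exactly the same difficulty, which you would still owe. Combined with your explicit deferral of the sign bookkeeping (which the paper calls ``the only difficulty'' in this step), the proposal identifies the right statement to prove but omits the mathematical content needed to prove it.
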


Recall that a dBV algebra $(X,d,\Delta,\bullet)$ is a differential
graded commutative associative algebra $(X, d, \bullet)$, with $d$ of
degree $+1$, and differential $\Delta$ of degree $-1$ such that $d$
graded commutes with $\Delta$ (so that $d \Delta + \Delta d =0$), and
finally the deviation $\{,\}$ of $\Delta$ from being a derivation of
$\bullet$,
\[
\{x,y\}=(-1)^{|x|}\Delta(x\bullet y)-(-1)^{|x|}\Delta(x)\bullet y -
x\bullet \Delta(y)
\]
satisfies,
\begin{eqnarray*}
  & \{x,y\}=-(-1)^{(|x|+1)(|y|+1)} \{y,x\} & \text{(Anti-symmetry),} \\
  & \{x\bullet y,z\}=x\bullet\{y,z\}+(-1)^{|y|(|z|+1)} \{x,z\}\bullet y & \text{(Leibniz relation).}
\end{eqnarray*}

The Leibniz relation can be read as saying that bracketing with a fixed
element (on the right) is a graded derivation of the product
$\bullet$. These relations imply that bracketing with a fixed element
on the left is also a graded derivation
\[
\{ x , y\bullet z \} = \{x,y\} \bullet z + (-1)^{(|x|+1)y} y \bullet
\{x,z\}
\]
and also imply that bracketing with a fixed element is a graded
derivation of the bracket,
\[
\{x,\{y,z\}\} = \{ \{x,y\} ,z\} +(-1)^{(|x|+1)(|y|+1)} \{ y,\{x,z\}\}
\quad \quad \text{(Jacobi identity).}
\]

A morphism of dBV algebras $X$ and $Y$ is a map $f: X \to Y$ that
preserves the structures $d, \Delta$, and $\bullet$.
 
\begin{rem}
  In the special case where $\mu_A$ is graded commutative, $\Delta$ becomes a
  derivation of $\bullet$ and, thus, the bracket $\{,\}$ is zero.  This
  is well known in the literature, see for example \cite{L}.  We were surprised we could not find 
 in the literature the fact that $TA$ becomes a dBV algebra when $\mu_A$ is not necessarily commutative.
  There is, however, a similar ``Lie'' version which is well known:
  the symmetric algebra of the underlying vector space of a Lie algebra is a BV algebra (see \cite{TT}). 
\end{rem}

Theorem \ref{theorem1} generalizes naturally.  If $(A,\mu_1,\mu_2,\mu_3,\ldots)$ is
an $A_\infty$ algebra, then for each $k=1,2,\ldots$, the 
linear map $\mu_k:A^{\otimes k}\to A$ can be extended to a coderivation of degree $3-2k$ of the tensor
coproduct $\Delta_{3-2k}:TA\to TA$.  In Section \ref{proof2} we show:
\begin{thm}\label{theorem2}
  If $(A,\mu_1,\mu_2,\mu_3,\ldots)$ is an $A_\infty$ algebra, 
  then $(TA,\bullet,\Delta_{1},\Delta_{-1},\Delta_{-3},\ldots)$ 
  defines a commutative $BV_\infty$ algebra. 
\end{thm}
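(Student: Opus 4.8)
The plan is to prove Theorem 2 as a direct generalization of Theorem 1, leveraging the fact that all the operators $\Delta_{3-2k}$ are coderivations of the same tensor coproduct. The key structural observation is that a $BV_\infty$ algebra on $TA$ amounts to a single square-zero coderivation $D = \sum_{k\geq 1}\Delta_{3-2k}$ on $TA$ together with compatibility between $D$ and the shuffle product $\bullet$, encoded as higher brackets measuring the failure of each $\Delta_{3-2k}$ to be a derivation of $\bullet$. So first I would record the defining data: the $A_\infty$ relations on $(A,\mu_1,\mu_2,\ldots)$ say precisely that the total coderivation $b = \sum_k \mu_k$ (suitably desuspended) squares to zero on the tensor coalgebra, and since $\mu_k \mapsto \Delta_{3-2k}$ is the standard extension-to-coderivation map, which is a morphism of the relevant Lie algebra of coderivations, the $A_\infty$ relations transport to $D^2 = 0$ on $TA$.

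The heart of the argument is to verify that $(TA,\bullet,\Delta_1,\Delta_{-1},\Delta_{-3},\ldots)$ satisfies the commutative $BV_\infty$ axioms, which generalize the single-operator dBV condition to the statement that $D$ is a differential operator of ``order $\leq$ filtered type'' with respect to $\bullet$. Concretely, I would define for each $k$ the deviation of $\Delta_{3-2k}$ from being a derivation of $\bullet$ and show these deviations assemble into the brackets of the $BV_\infty$ structure; the commutativity of $\bullet$ (already established in the excerpt) is what forces the structure to be a \emph{commutative} $BV_\infty$ algebra rather than a general one. The computations here are all of the same flavor as in Theorem 1: one expands $\Delta_{3-2k}(x\bullet y)$ using the Leibniz-type behavior of coderivations against the shuffle coproduct, tracks Koszul signs, and checks that the resulting hierarchy of identities matches the $BV_\infty$ axioms. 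Because each $\Delta_{3-2k}$ is built from a single $\mu_k$ by the same coderivation extension, each individual compatibility with $\bullet$ reduces to an algebraic identity for $\mu_k$ interacting with shuffles, exactly as the $k=2$ case was handled in Section \ref{proof1}.

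I expect the main obstacle to be bookkeeping rather than conceptual: managing the Koszul signs uniformly across all degrees $3-2k$ simultaneously, and verifying that the deviations of the $\Delta_{3-2k}$ from being $\bullet$-derivations satisfy exactly the coherence the $BV_\infty$ definition demands (i.e., that the higher brackets interlock correctly with $D^2=0$). The cleanest way to sidestep a term-by-term sign nightmare is to work coalgebraically: express both the shuffle product $\bullet$ and each $\Delta_{3-2k}$ in terms of the tensor coproduct, and verify the compatibility at the level of the cofree structure, where the derivation/co-derivation calculus is sign-coherent by construction. I would then invoke the functoriality of the coderivation extension to conclude that the whole package is consistent. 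As a sanity check, setting $\mu_k = 0$ for $k\geq 3$ and retaining only $\mu_1 = d_A$, $\mu_2 = \mu_A$ must recover precisely the dBV structure of Theorem \ref{theorem1}, which fixes all sign conventions and confirms that the higher $\Delta_{3-2k}$ contribute exactly the higher-bracket terms of the $BV_\infty$ hierarchy.
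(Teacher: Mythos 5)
Your first step agrees with the paper's: the $A_\infty$ relations, read through the standard lift-to-coderivation map, give for free that each $\Delta_{3-2k}$ is a coderivation of degree $3-2k$, that $\Delta_1=d$, and that $\sum_{j+k=n}\Delta_j\Delta_k=0$. But the remaining --- and only substantive --- obligation is never pinned down, let alone proved, in your proposal. Kravchenko's definition of a commutative $BV_\infty$ algebra requires, concretely, that each $\Delta_{3-2k}$ have \emph{operator-order} $k$ with respect to $\bullet$, i.e.\ that the alternating sum
\[
\sum (-1)^{k+1-r+\kappa}\,\Delta_{3-2k}\bigl(x_{i_1}\bullet\cdots\bullet x_{i_r}\bigr)\bullet x_{i_{r+1}}\bullet\cdots\bullet x_{i_{k+1}}
\]
over nonempty subsets $\{i_1<\cdots<i_r\}\subseteq\{1,\ldots,k+1\}$ vanishes. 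Your proposal replaces this precise condition with phrases like ``deviations assemble into the brackets,'' ``order $\leq$ filtered type,'' and ``the derivation/co-derivation calculus is sign-coherent by construction.'' None of these is a checkable statement, and none supplies an argument. There is no formal-nonsense reason why a coderivation of the deconcatenation coproduct should be a finite-order differential operator for the shuffle product; this is a genuine cancellation that has to be established, and working ``at the level of the cofree structure'' does not make it automatic --- sign coherence was never the real difficulty.

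The paper isolates exactly this point in its Lemma: for \emph{any} linear map $f\colon A^{\otimes n}\to A$, with no algebraic hypotheses whatsoever, the coderivation lift $F$ has operator-order $n$ with respect to $\bullet$. Its proof is an inclusion--exclusion argument: a given term in which $f$ consumes entries coming from the monomials indexed by $\{i_1,\ldots,i_k\}$ occurs exactly once in the summand attached to every index set $J$ with $\{i_1,\ldots,i_k\}\subseteq J\subseteq\{1,\ldots,n+1\}$, with all signs agreeing except for a factor $(-1)^{|J|}$; summing gives $\sum_J(-1)^{|J|}=(-1)^k(1-1)^{n+1-k}=0$ by the binomial theorem. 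Your proposal contains nothing that plays the role of this counting argument; declaring the verification to be ``bookkeeping rather than conceptual'' and ``the same flavor as Theorem \ref{theorem1}'' dismisses precisely the step where the mathematics lives (and note the Theorem \ref{theorem1} computation you lean on only treats the binary case $k=2$, whose generalization to all $k$ is the Lemma). Your sanity check --- that $\mu_k=0$ for $k\geq 3$ recovers Theorem \ref{theorem1} --- is consistent with the paper, which observes in a remark that Theorem \ref{theorem1} follows from Theorem \ref{theorem2}, but it cannot substitute for the missing order-$k$ verification.
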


\begin{rem}   A commutative $BV_\infty$ algebra, as defined by Kravchenko \cite{K}, is a generalization of a dBV algebra, and a special case of a
  $BV_\infty$ algebra, as shown in \cite{GTV}.  (See also \cite{DC}.)
  The precise definition is given in Section \ref{proof2}, where we show the requisite property that  $\Delta_{3-2k}$ has operator-order $k$ with respect to the shuffle product.  
  
  From a logical point of view, it is probably better to prove Theorem \ref{theorem2} first, from which 
  Theorem \ref{theorem1} follows, see Remark \ref{thm2-->thm1} below. However, we prefer to give a direct proof of Theorem \ref{theorem1} using the traditional definition of a dBV algebra, making this an easy to read self-contained section. This also has the advantage of giving an explicit formula for the bracket $\{\,,\,\}$, and gives us the opportunity to illustrate explicitly how the signs are checked in this context.
  \end{rem}

\begin{ack}
  The second author was partially supported by the Max-Planck
  Institute in Bonn, Germany.  
  The third author was supported in part by a grant from The City University of New York PSC-CUNY Research Award Program.
  We would like to thank Gabriel
  Drummond-Cole and Bruno Vallette for useful discussions about $BV_\infty$
  algebras.
\end{ack}

\section{Proof of the Theorem \ref{theorem1}}\label{proof1}

The identities $d^2=0$, $\Delta^2=0$, $\bullet$ being associative and
graded commutative, and $d$ being a derivation of $\bullet$ are all
straightforward. The (graded) anti-symmetry of the bracket follows
formally from the (graded) symmetry of $\bullet$. The functoriality
statement is immediate. It remains to show that the bracket $\{,\}$
satisfies the Leibniz relation.

We abbreviate $a_{i_1}\otimes \dots\otimes a_{i_k}$ by
$a_{i_1,\dots,i_k}$, and $\sigma^{-1}(i)$ by $\sigma^{-1}_i$ for a
permutation $\sigma\in \Sigma_k$. First, we may calculate the bracket
as
\begin{multline*}
  \{ a_{1,\dots,n},a_{n+1,\dots,n+m} \} = \sum_{\sigma\in S(n,m)} \pm
  \Delta (a_{\sigma^{-1}_1,\dots,\sigma^{-1}_{n+m}}) \\ - \left( \pm
    \Delta (a_{1,\dots,n})\bullet a_{n+1,\dots,{n+m}} \right) - \left(
    \pm a_{1,\dots,n}\bullet \Delta (a_{n+1,\dots,{n+m}}) \right)
\end{multline*}

We claim that every term in the last two expressions cancels with
precisely one term in $\sum_{\sigma\in S(n,m)} \pm \Delta
(a_{\sigma^{-1}_1,\dots,\sigma^{-1}_{n+m}})$ so that $\{
a_{1,\dots,n},a_{n+1,\dots,n+m} \} $ equals
\begin{multline*}
  \sum_{\sigma\in S(n,m)} \sum_{j\in
    C^{\{1,\dots,n\},\{n+1,\dots,n+m\}}_\sigma} \pm
  a_{\sigma^{-1}_1,\dots,\sigma^{-1}_{j-1}} \otimes
  \mu_A(a_{\sigma^{-1}_j},a_{\sigma^{-1}_{j+1}})\otimes
  a_{\sigma^{-1}_{j+1},\dots,\sigma^{-1}_{n+m}},
\end{multline*}
where the set $C^{I,J}_\sigma$ is defined, for a permutation
$\sigma\in \Sigma_k$ and disjoint set of indices $I\cup J \subseteq
\{1,\dots,k\}$ with $I\cap J=\varnothing$, by
\begin{equation*}
  C^{I,J}_\sigma =\{j\quad:\quad  \sigma^{-1}_j\in I \text{ and } \sigma^{-1}_{j+1}\in J
  \text{, or }\sigma^{-1}_j\in J \text{ and } \sigma^{-1}_{j+1}\in I\}.
\end{equation*}
In other words, $\mu_A$ is applied in the above sum whenever exactly
one of the two elements $a_{\sigma^{-1}_j}$ and
$a_{\sigma^{-1}_{j+1}}$ is taken from $a_1,\dots,a_n$, and the other
element is taken from $a_{n+1},\dots,a_{n+m}$. Since the correct terms
appear exactly once, the only difficulty is to check the cancellation
by signs, which we leave to the end of this section.

Assuming this, if we abbreviate the expression
$a_{i_1,\dots,i_{j-1}} \otimes \mu_A(a_{i_j},a_{i_{j+1}})\otimes
a_{i_{j+1},\dots,i_k}$ by $a^{(j,j+1)}_{i_1,\dots,i_k}$, then we can
write,
$$
\{ a_{1,\dots,n},a_{n+1,\dots,n+m} \}=\sum _{\sigma \in
  S(n,m)}\sum_{j\in C^{\{1,\dots,n\},\{n+1,\dots,n+m\}}_\sigma} \pm
a^{(j,j+1)}_{\sigma^{-1}_1,\dots,\sigma^{-1}_{n+m}}
$$
With this, we can check that $\{a_{1,\dots,n}\bullet
a_{n+1,\dots,n+m}, a_{n+m+1,\dots,n+m+p}\} $ equals
\begin{eqnarray*}
  &=&\sum_{\sigma\in S(n,m)} \pm \{a_{\sigma^{-1}_1,\dots,\sigma^{-1}_{n+m}},a_{n+m+1,\dots,n+m+p} \}\\
  &=&  \sum_{\rho\in S(n,m,p)} \sum_{j\in C^{\{1,\dots,n+m\},\{n+m+1,\dots,n+m+p\}}_\rho} \pm a^{(j,j+1)}_{\rho^{-1}_1,\dots,\rho^{-1}_{n+m+p}}\\
  &=& \sum_{\rho \in S(n,m,p)}\sum_{j\in C^{\{1,\dots,n\},\{n+m+1,\dots,n+m+p\}}_\rho} \pm  a^{(j,j+1)}_{\rho^{-1}_1,\dots,\rho^{-1}_{n+m+p}}\\
  &&+ \sum_{\rho \in S(n,m,p)}\sum_{j\in C^{\{n+1,\dots,n+m\},\{n+m+1,\dots,n+m+p\}}_\rho} \pm a^{(j,j+1)}_{\rho^{-1}_1,\dots,\rho^{-1}_{n+m+p}}\\
  &=& a_{1,\dots,n}\bullet\{a_{n+1,\dots,n+m},a_{n+m+1,\dots,n+m+p}\}\\ 
  && \hspace{1.5in}  \pm  \{a_{1,\dots,n},a_{n+m+1,\dots,n+m+p}\}\bullet a_{n+1,\dots,n+m},
\end{eqnarray*}
where $S(n,m,p) \subseteq \Sigma_{n+m+p}$ consists of those permutations
$\rho\in \Sigma_{n+m+p}$ that satisfy $ \rho(1)<\dots <\rho(n),
\rho(n+1)<\dots <\rho(n+m)$, and $\rho(n+m+1)<\dots <\rho(n+m+p)$. By
a careful consideration of the signs similar to the check below, it follows that the
Leibniz identity holds.

Now, we check the sign mentioned above. If we shuffle $a_{n+1, \ldots ,n+j}$ past $a_i$, for $1 \leq i \leq n$
and $1 \leq j \leq m$, and then apply $\Delta$, we obtain the term
\[
a_{1, \ldots, i-1} \ot a_{n+1, \ldots , n+j} \ot \mu_A(a_i , a_{i+1})
\ot a_{i+2, \ldots n} \ot a_{n+j+1, \ldots , n+m}
\]
with sign
\[
(-1)^{(|a_i| + \dots + |a_n| + n-i + 1)( |a_{n+1}| + \dots + |a_{n+j}|
  + j) + ( |a_1| + \dots + |a_{i-1} | + | a_{n+1}| + \dots + |a_{n+j}|
  + |a_i| + (i-1+j))}
\]
while in the other order, $\Delta$ then shuffle, we obtain the same
term with sign
\[
(-1)^{ ( |a_1| + \dots + |a_{i}| + i + 1) + (|\mu(a_i,a_{i+1}) | +
  |a_{i+2}| + \dots + |a_n| + n - i)( |a_{n+1}| + \dots + |a_{n+j}| +
  j) }
\]
and these agree. This special case implies the general case, for any
shuffle, since a more general shuffle introduces the same additional
sign in both cases.

Similarly, shuffling $a_{i+1, \ldots,n}$ past $a_{n+j+1}$ for $1 \leq
i < n$ and $1 \leq j < m$, and then applying $\Delta$, we obtain the
term
\[
a_{1, \ldots, i} \ot a_{n+1, \ldots ,n+j-1} \ot \mu_A(a_{n+j},
a_{n+j+1}) \ot a_{i+1, \ldots, n} \ot a_{n+j+2, \ldots, n+m}
\]
with sign
\[
(-1)^{(|a_{i+1}| + \dots + |a_n| + n-i)(|a_{n+1}| + \dots +
  |a_{n+j+1}| + j+1) +(|a_1| + \dots + |a_i| + |a_{n+1}| + \dots +
  |a_{n+j}| + i+j + 1) }
\]
while in the other order we obtain the same term with sign
\[
(-1)^{( |a_{n+1}| + \dots + |a_{n+j-1}| + j-1) + (|a_{i+1}| + \dots +
  |a_n| + n-i)(|a_{n+1}| + \dots + |a_{n+j-1}| +
  |\mu(a_{n+j},a_{n+j+1})| + j ) }
\]
These differ by $(-1)^{|a_1| + \dots + |a_n| +n}$, as expected. Again,
this special case implies the general case, as before. This completes the proof of Theorem \ref{theorem1}.

\section{Proof of Theorem \ref{theorem2}}\label{proof2}
Let $(X,\bullet)$ be a graded commutative associative algebra.  An operator
$\Delta:X\to X$ has operator-order $n$ if and only if
\begin{equation*}
\sum
(-1)^{n+1-r+\kappa} \Delta\left (x_{i_1}\bullet \cdots \bullet
  x_{i_r}\right) \bullet x_{i_{r+1}}\bullet \cdots \bullet
x_{i_{n+1}}=0
\end{equation*}
where the sum is taken over nonempty subsets $\{i_1,\ldots,
i_r:i_1<\ldots<i_r\}\subseteq \{1,\ldots, n+1\}$ and $\{1,\ldots,
n+1\}\setminus \{i_1, \ldots, i_r\}$ has been ordered 
$i_{r+1}<\cdots <i_{n+1}$, and $\kappa$ comes from the usual Koszul sign rule.

If $\Delta$ has operator-order one, then it is a
derivation of $\bullet$.  If $\Delta$ has operator-order two, then
its deviation from being a derivation of $\bullet$, 
is a derivation of $\bullet$.  This means that if we define
$\{\,,\,\}$ to be the deviation of $\Delta$ from being a derivation of
$\bullet$, then $\{\,,\,\}$ and $\bullet$ satisfy the Leibniz
relation.

\begin{rem}\label{thm2-->thm1}
Using this fact, one can prove Theorem \ref{theorem1} without reference to
the bracket---here is an outline: any map $\mu_A: A\otimes A \to A$
 becomes an order $2$ operator $\Delta:TA\to TA$ with respect to the
 shuffle product when it is lifted as a coderivation of the tensor
 coproduct (as we will show in the lemma below). It is straightforward to check that $\mu_A$ being associative implies that $\Delta^2=0$, since $\Delta^2$ is the lift of the associator of $\mu_A$ to a coderivation. So, if $(A,d_A,\mu_A)$ is a differential graded algebra, with $\mu_A$ of degree zero, $\Delta$ has degree $-1$, and since $d_A$ is a derivation of $\mu_A$, then $d:TA\to TA$ and $\Delta:TA \to TA$ commute.  That proves that $(TA,d,\Delta,\bullet)$ is a dBV algebra. 
\end{rem}
 
To generalize: a Kravchenko commutative $BV_\infty$ algebra consists of
 a graded commutative
differential graded algebra $(X,d,\bullet)$ and a collection
$\{\Delta_k:X\to X\}_{k=1,-1,-3,-5,\ldots}$ of
operators satisfying
\begin{itemize}
\item $\Delta_{1}=d$,
\item each $\Delta_{3-2k}$ has degree $3-2k$ and operator-order $k$,
\item for each $n$, $\sum_{j+k=n}\Delta_{j}\Delta_{k}=0$.
\end{itemize}
We use
the degree convention in \cite{K} but note that in \cite{GTV} the
opposite convention is used (there, $d$ has degree $-1$ and the higher $\Delta$ operators have positive degree).  As a special case, a dBV algebra is a 
Kravchenko commutative $BV_\infty$ algebra with
$\Delta_{-3}=\Delta_{-5}=\cdots=0.$

To prove Theorem \ref{theorem2}, assume that $(A,\mu_1,\mu_2, \mu_3 \ldots)$
is an $A_\infty$ algebra.  By definition of an $A_\infty$ algebra, each $\mu_k$ lifts to a
degree $3-2k$ coderivation $\Delta_{3-2k}:TA\to TA$, with
$\Delta_{1}=d$ and relations $\sum_{j+k=n}\Delta_{j}\Delta_{k}=0$.
Thus it only remains to prove, that each $\Delta_{3-2k}$ has order $k$ with respect to the shuffle product $\bullet$.  This follows from the following general lemma.
\begin{lemma}
Let $f:A^{\ot n}\to A$ be any linear map and let $F:TA\to TA$
  be the lift of $f$ to a coderivation.  Then $F$ has
  order $n$ with respect to the shuffle product.
\end{lemma}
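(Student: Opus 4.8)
The plan is to verify the operator-order-$n$ identity for $F$ directly and to exhibit the cancellation of its terms in groups governed by an inclusion--exclusion (binomial) identity. First I would record the explicit form of the coderivation lift: $F$ inserts $f$ into $n$ consecutive tensor factors,
\[
F(a_1\otimes\cdots\otimes a_N)=\sum_{i=0}^{N-n}(-1)^{\epsilon_i}\,a_1\otimes\cdots\otimes a_i\otimes f(a_{i+1},\dots,a_{i+n})\otimes a_{i+n+1}\otimes\cdots\otimes a_N,
\]
where $\epsilon_i$ is the Koszul sign incurred in moving $f$ past $a_1,\dots,a_i$. By multilinearity it suffices to test the order-$n$ condition on monomials $x_1,\dots,x_{n+1}\in TA$, so I must show that
\[
\Phi(x_1,\dots,x_{n+1}):=\sum_{\varnothing\neq S\subseteq\{1,\dots,n+1\}}(-1)^{\,n+1-|S|+\kappa}\,F\big(x_{i_1}\bullet\cdots\bullet x_{i_r}\big)\bullet x_{i_{r+1}}\bullet\cdots\bullet x_{i_{n+1}}=0,
\]
where $S=\{i_1<\cdots<i_r\}$, the complement is ordered $i_{r+1}<\cdots<i_{n+1}$, and $\kappa$ is the block-level Koszul sign. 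Throughout I would regard the tensor factors (``letters'') of the $x_i$ as distinguishable, labelled by their input and position, so that each shuffle product is a sum over order-preserving interleavings.

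Next I would expand every term of $\Phi$ into fully shuffled labelled words. The structural point is that $F$ applies $f$ to a block of $n$ \emph{consecutive} letters of $x_{i_1}\bullet\cdots\bullet x_{i_r}$, and since $f$ outputs a single letter, the later shuffle by the remaining letters cannot penetrate this block. Consequently, in each resulting word the $n$ letters fed to $f$ occupy $n$ consecutive positions of the total order on all letters, and they are drawn only from inputs in $S$. A final term is therefore specified by a total order on all letters (respecting the internal order of each $x_i$) together with a choice of $n$ consecutive letters---the ``window.'' Let $T\subseteq\{1,\dots,n+1\}$ be the set of inputs contributing a letter to the window. Because the window has exactly $n$ letters, $|T|\le n$, so $T$ is a \emph{proper} subset: at least one input contributes no letter to the window.

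The combinatorial heart is then a counting statement: a fixed final term (fixed total order together with fixed window) arises from precisely those subsets $S$ with $T\subseteq S$. Indeed the window letters must lie in $S$, and conversely for any $S\supseteq T$ the configuration is realizable, since deleting the letters of the inputs outside $S$ leaves the window consecutive in $x_{i_1}\bullet\cdots\bullet x_{i_r}$ (and that product has length at least $n$, so $F$ does act). Writing $S=T\sqcup U$ with $U\subseteq\{1,\dots,n+1\}\setminus T$ and $m=n+1-|T|\ge1$, the explicit coefficients sum to
\[
\sum_{U\subseteq\{1,\dots,n+1\}\setminus T}(-1)^{\,n+1-|T|-|U|}=(-1)^{m}(1-1)^{m}=0,
\]
so the contributions to each fixed final term cancel and $\Phi\equiv0$.

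The step I expect to be the main obstacle is the sign bookkeeping needed to justify pulling the factor $(-1)^{\,n+1-|S|}$ out of the sum above---that is, showing that for a fixed final term all contributions with $T\subseteq S$ carry a common sign depending only on that term. The mechanism is that the block-level sign $\kappa$, the internal shuffle signs in forming $x_{i_1}\bullet\cdots\bullet x_{i_r}$ and in multiplying by $x_{i_{r+1}}\bullet\cdots$, and the intrinsic signs $\epsilon_i$ of the lift $F$, together with the graded commutativity of $\bullet$, must reassemble into one $S$-independent sign. I would verify this by the same explicit, one-crossing-at-a-time check used in the proof of Theorem \ref{theorem1}, taking care that a letter crossing the degree-shifted output of $f$ contributes consistently. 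Already for $n=1$ this is exactly the computation reducing the claim to the graded Leibniz rule $F(x\bullet y)=F(x)\bullet y+(-1)^{|F||x|}\,x\bullet F(y)$, and the general case propagates this through the window in the same manner.
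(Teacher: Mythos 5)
Your proposal is correct and takes essentially the same approach as the paper: expand every term of the order-$n$ identity into shuffled words with $f$ applied to a consecutive block, observe that the set $T$ of inputs contributing letters to that block has size at most $n$ (hence is proper), note that each fixed resulting term occurs exactly once for every $S\supseteq T$ with sign varying only through $(-1)^{|S|}$, and cancel via $\sum_{U\subseteq\{1,\dots,n+1\}\setminus T}(-1)^{|U|}=(1-1)^{n+1-|T|}=0$. The sign-uniformity step you flag as the main obstacle is exactly the point the paper also asserts (without a one-crossing-at-a-time verification), so your write-up matches the paper's level of rigor as well as its strategy.
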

\begin{proof}
Let $X^1, \ldots, X^{n+1}$ be monomials in $TA.$  
So, $X^i=a^i_{1}\otimes \cdots \otimes a^i_{s_i}$ with each $a^i_\ell \in
A$.  Then $(-1)^{n+1-r+\kappa} F(X^{i_1} \bullet \cdots \bullet X^{i_r})\bullet X^{i_{r+1}}\dots\bullet X^{i_{n+1}}$ consists of a sum of terms of
the form
\begin{equation}\label{EQ1}
\pm \dots\ot f(a^{i'_1}_{\ell_1} \ot \cdots \ot a^{i'_n}_{\ell_n})\ot \dots
 \text{ (the rest of the $a^i_\ell$'s are outside of $f$)},
\end{equation} 
where $f$ is applied to $a^{i'_1}_{\ell_1} \otimes \cdots \ot a^{i'_n}_{\ell_n}$, and the remaining tensor products are applied outside of $f$. The list $\{i'_1, \ldots,
i'_n\}$ may contain repetition, and we may order the list from smallest to largest
without repetition as $\{i_1, \ldots, i_k \}$.  Every term of the form \eqref{EQ1} which contains only the indices $\{i_1, \ldots, i_k \}$ inside $f$, appears for each index set $J=\{j_1,\dots,j_q\}$ with $\{i_1,\dots, i_k\}\subseteq J \subseteq \{1,\dots, n+1\}$ exactly once in the sum of $(-1)^{n+1-q+\kappa} F(X^{j_1} \bullet \cdots \bullet X^{j_q})\bullet X^{j_{q+1}}\dots\bullet X^{j_{n+1}}$. Now, for a fixed expression in Equation \eqref{EQ1} induced by different index sets $J$, the only difference in the sign of \eqref{EQ1} is a factor of $(-1)^q$, where $q=|J|$, and all other signs coincide for varying $J$. We thus need to show that summing $(-1)^{|J|}$ over all $J$ with $\{i_1,\dots, i_k\}\subseteq J \subseteq \{1,\dots, n+1\}$ vanishes. Since there are exactly $n+1-k$ choose $q-k$ such subsets $J$ with $q$ elements, we obtain that
\begin{multline*}
\sum_J (-1)^{|J|}=\sum_{q=k}^{n+1} \left(\begin{matrix} n+1-k \\ q-k \end{matrix}\right)\cdot (-1)^q =(-1)^k \cdot \sum_{q'=0}^{n+1-k} \left(\begin{matrix} n+1-k \\ q' \end{matrix}\right)\cdot  (-1)^{q'}
\\
=(-1)^k \cdot (-1+1)^{n+1-k} =0,
\end{multline*}
where we used the binomial theorem in the second to last equality. This completes the proof of the lemma.
\end{proof}

\bibliographystyle{plain}
\bibliography{TTW.bib}

\end{document}